\DeclareMathOperator{\adj}{{\rm adj}}
\newtheorem{theorem}{Theorem}[section]
\newtheorem{lemma}[theorem]{Lemma}
\newtheorem{corollary}[theorem]{Corollary}
\theoremstyle{definition}
\newtheorem{remark}[theorem]{Remark}
\title{A characterization of skew Hadamard matrices and doubly regular tournaments}
\author{
Hiroshi Nozaki\footnote{Research supported by JSPS Research Fellowship. 
}
and 
Sho Suda
}
\begin{document}
\maketitle

\renewcommand{\thefootnote}{\fnsymbol{footnote}}
\footnote[0]{2010 Mathematics Subject Classification: 05B20}
\begin{abstract}
We give a new characterization of skew Hadamard matrices of size $n$ in terms of the data of the spectra of tournaments of size $n-2$.
\end{abstract}

\section{Introduction}
A square matrix $H$ of size $n$ with entries $\pm1$ is a {\it Hadamard matrix} if $H H^T=n I$.
Hadamard matrices of size $n$ exist only if $n$ is two or a multiple of four.
A Hadamard matrix $H$ is said to be {\it skew} if $H+H^T=2I$, where $I$ denotes the identity matrix.
It was shown that the existence of the following are equivalent:
\begin{enumerate}
\item\label{1} Skew Hadamard matrices of size $n$.
\item\label{2} Doubly regular tournaments of size $n-1$ \cite{RB1972}.
\item\label{3} Irreducible tournaments of size $n$ having $4$ eigenvalues, one of which is zero with the algebraic multiplicity $1$ \cite{KB1994}.
\item\label{4} Tournaments of size $n-1$ with spectrum $\{k^1,\big(\tfrac{-1+\sqrt{-k}}{2}\big)^k,\big(\tfrac{-1-\sqrt{-k}}{2}\big)^k\}$, where $n=2k+2$ \cite{Z}
\end{enumerate}
For a skew Hadamard matrix $H$, we normalize $H$ so that the first row of $H$ consists of the all-ones vector.
We can construct a $(0,1)$-matrix $A$ by $A=\frac{1}{2}(J-H)$, where $J$ denotes the all-ones matrix.
Then $A$ is the adjacency matrix of a tournament satisfying the condition \eqref{3}.
For such a matrix $A$, we consider the principal submatrix $A_1$ of size $n-1$ by deleting the first row and column.
Then $A_1$ satisfies the condition \eqref{4} and \eqref{2},--and vice versa.
Thus skew Hadamard matrices of size $n$ are characterized by some tournaments of size $n-1$.

As described in the above, the characterization for some property of an oriented graph in terms of the spectrum of its adjacency matrix is important and useful.
For example, a tournament is regular if and only if its adjacency matrix has the all-ones eigenvector.

In the algebraic graph theory, the adjacency matrix plays an important role \cite{CRS1997,GR2001}.
The adjacency matrix of an undirected graph is diagonalizable. 
However that of an oriented graph is not necessarily diagonalized, 
and hence the dealing with the adjacency matrix for an oriented graph is more difficult than the case of an undirected graph.
In the area of two-graphs the Seidel matrix, namely so-called a $(0,\pm1)$-adjacency matrix, is used \cite[Section 11]{GR2001}.
The Seidel matrix for an oriented graph is defined naturally, and
since it is always Hermitian, it is easy to use the Seidel matrix in the case of an oriented graph.   

In the present paper, we give another characterization of a skew Hadamard matrix of size $n$ in terms of the spectrum of the Seidel matrix of a tournament of size $n-2$.
Our main theorem is as follows: 
\begin{theorem}\label{thm:1}
Let $n=4k+3$, where $k$ is a non-negative integer.
Then there exists a doubly regular tournament of size $n$ if and only if there exists a tournament of size $n-1$ with the adjacency matrix $A_1$ such that $S_1=\sqrt{-1}(A_1-A_1^T)$ satisfies the following spectrum condition: 
\begin{align}\label{eq:1}
 (\tilde{\theta}_i)_{i=1}^4=(\sqrt{n},1,-1,-\sqrt{n}) \text{ with } \tilde{\beta}_1=\tilde{\beta_4}=0, \tilde{\beta}_2=\tilde{\beta}_3=\frac{1}{\sqrt{2}}.
\end{align}
\end{theorem}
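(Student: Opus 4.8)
The plan is to reformulate the theorem in terms of the skew-symmetric $\{0,\pm1\}$-matrix attached to a tournament and to prove both implications at once by a bordering argument. For a tournament of size $m$ with adjacency matrix $A$, set $C=-(A-A^{T})$, a skew-symmetric matrix with zero diagonal and off-diagonal entries $\pm1$; then the Seidel matrix is $S:=\sqrt{-1}(A-A^{T})=-\sqrt{-1}\,C$, and, as recalled in the introduction (through $A=\tfrac12(J-I-C)$), the tournament is doubly regular exactly when $C\mathbf 1=0$ and $C^{2}=J-mI$, i.e. when $S\mathbf 1=0$ and $S^{2}=mI-J$; in that case $S$ has spectrum $\{0^{1},(\sqrt m)^{(m-1)/2},(-\sqrt m)^{(m-1)/2}\}$ with kernel spanned by $\mathbf 1$. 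So one has to connect the existence of such an $n\times n$ matrix $S$ with the existence of an $(n-1)\times(n-1)$ matrix $S_1$ of the kind described by \eqref{eq:1}.

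For the ``only if'' direction I would start from a doubly regular tournament of size $n$, delete one vertex (relabelled last), and take $S_1$ to be the resulting principal submatrix of $S$; writing $S=\left(\begin{smallmatrix}S_1&c\\ c^{\ast}&0\end{smallmatrix}\right)$, the border $c\in\mathbb C^{n-1}$ has all entries $\pm\sqrt{-1}$. The relations $S\mathbf 1=0$ and $S^{2}=nI-J$ unfold into $S_1\mathbf 1=-c$, $c^{\ast}\mathbf 1=0$, $S_1c=-\mathbf 1$, $c^{\ast}c=n-1$ and $S_1^{2}=nI-J-cc^{\ast}$. Then $W:=\mathrm{span}\{\mathbf 1,c\}$ is an $S_1$-invariant plane, and since $\mathbf 1\perp c$ with $\|\mathbf 1\|^{2}=\|c\|^{2}=n-1$, in the orthonormal basis $\bigl(\tfrac{\mathbf 1}{\sqrt{n-1}},\tfrac{c}{\sqrt{n-1}}\bigr)$ the operator $S_1|_{W}$ is $\left(\begin{smallmatrix}0&-1\\ -1&0\end{smallmatrix}\right)$, with eigenvalues $1,-1$ and eigenvectors $\mathbf 1\mp c$; while on $W^{\perp}$ one reads off $S_1^{2}=nI$, hence eigenvalues $\pm\sqrt n$, of equal multiplicity $(n-3)/2$ by tracelessness. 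This gives the spectrum $(\sqrt n,1,-1,-\sqrt n)$ with multiplicities $(\tfrac{n-3}{2},1,1,\tfrac{n-3}{2})$, and because $\mathbf 1\in W$ the main angles at $\pm\sqrt n$ vanish while those at $\pm1$ come out as $1/\sqrt2$ by a one-line projection of $\mathbf 1$ onto $\mathbf 1\mp c$ — that is, \eqref{eq:1}.

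For the ``if'' direction I would reverse the argument. Given $A_1$ with $S_1$ as in \eqref{eq:1}, let $W$ be the span of the (simple) $1$- and $(-1)$-eigenspaces of $S_1$; then $S_1^{2}=nI-(n-1)P_W$ with $P_W$ the orthogonal projection onto $W$, and $\tilde\beta_1=\tilde\beta_4=0$ forces $\mathbf 1\in W$. Put $c:=-S_1\mathbf 1$; then $c\in S_1W\subseteq W$, $c^{\ast}\mathbf 1=0$ (as $\mathbf 1^{T}S_1\mathbf 1=0$ for every tournament), $S_1c=-S_1^{2}\mathbf 1=-\mathbf 1$ (since $P_W\mathbf 1=\mathbf 1$), and $c^{\ast}c=\mathbf 1^{\ast}S_1^{2}\mathbf 1=n-1$. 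The crucial step is that $c$ is \emph{unimodular}: the $j$-th entry of $-\sqrt{-1}\,c=(A_1-A_1^{T})\mathbf 1$ is the out-degree minus the in-degree of the $j$-th vertex, an odd integer because each vertex of a tournament of size $n-1=4k+2$ has $n-2$ ($\equiv1\bmod2$) neighbours; thus $|c_j|\ge1$, and as $\sum_{j=1}^{n-1}|c_j|^{2}=n-1$ every $|c_j|=1$. Hence $S:=\left(\begin{smallmatrix}S_1&c\\ c^{\ast}&0\end{smallmatrix}\right)$ is Hermitian of size $n$ with zero diagonal and all off-diagonal entries $\pm\sqrt{-1}$, so $S=\sqrt{-1}(A-A^{T})$ for a tournament of size $n$. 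Finally, $\mathbf 1$ and $c$ being orthogonal and spanning $W$ gives $P_W=\tfrac1{n-1}(\mathbf 1\mathbf 1^{T}+cc^{\ast})$, hence $S_1^{2}=nI-J-cc^{\ast}$; together with $S_1c=-\mathbf 1$, $c^{\ast}c=n-1$ and $S_1\mathbf 1=-c$ this yields $S^{2}=nI-J$ and $S\mathbf 1=0$, so by the dictionary of the first paragraph the tournament is doubly regular of size $n$.

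I expect the main obstacle to be precisely that ``unimodularity'' step in the ``if'' direction: passing from the purely spectral identity $\|c\|^{2}=n-1$ to the combinatorial conclusion that every entry of the border vector is $\pm\sqrt{-1}$, which is what turns the abstract relation $S^{2}=nI-J$ into a genuine $\{0,\pm\sqrt{-1}\}$-matrix and hence an actual tournament — and it is exactly here that $n=4k+3$ (so $n-2$ odd) is used. Two lesser points need care: justifying $\dim W=2$ from the spectral data in \eqref{eq:1} (needed to identify $P_W$), and, in the ``only if'' direction, noticing the norm coincidence $\|\mathbf 1\|=\|c\|$, which is what makes $S_1|_{W}$ land on eigenvalues exactly $\pm1$; once that is in place the angle values $1/\sqrt2$ are automatic.
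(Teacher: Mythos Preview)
Your argument is correct. The sign in ``$-\sqrt{-1}\,c=(A_1-A_1^{T})\mathbf 1$'' is off by $-1$, but that is immaterial to the parity/unimodularity step; and the simplicity of the $\pm1$-eigenvalues, which you flag as a lesser point, follows immediately from Lemma~\ref{lem:1}(1),(2) once the four listed eigenvalues are assumed distinct.

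Your route, however, is genuinely different from the paper's. For the ``only if'' direction the paper invokes the interlacing theorem for bordered Hermitian matrices to pin down the $\pm\sqrt n$ part of the spectrum, then uses the trace identities of Lemma~\ref{lem:1} to recover $\pm1$, and finally exhibits an explicit eigenvector $\mathbf 1+(\sqrt{-1}-1)\mathbf v$ to read off the main angles. You instead expand the single relation $S^{2}=nI-J$ in block form, isolate the $S_1$-invariant plane $W=\mathrm{span}\{\mathbf 1,c\}$, and diagonalize $S_1$ on $W$ and $W^{\perp}$ separately --- no interlacing needed, and the main angles drop out of the orthogonal splitting rather than from an ad hoc eigenvector.

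For the ``if'' direction the contrast is sharper. The paper uses Lemma~\ref{lem:almost} to conclude that $G_1$ is almost regular, adds a vertex to make it regular, and then computes $P_S(t)$ via the perturbation formula of Lemma~\ref{lem:char} (through Corollary~\ref{cor:1}), matching the answer against Theorem~\ref{thm:2}. You bypass all of that machinery: your unimodularity step \emph{is} the almost-regularity argument done by hand, your choice $c=-S_1\mathbf 1$ is exactly the ``add a vertex to make it regular'' move rewritten for $S$, and instead of a characteristic-polynomial computation you verify $S^{2}=nI-J$ directly from $S_1^{2}=nI-(n-1)P_W$ and $P_W=\tfrac{1}{n-1}(J+cc^{*})$. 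What your approach buys is a self-contained, structure-driven proof that makes the two-dimensional subspace $W$ the organizing object for both directions; what the paper's approach buys is a demonstration that the general tools of Section~2 (main-angle calculus, Lemma~\ref{lem:char}) already suffice, without any bespoke block analysis.
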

Here $\tilde{\theta}_i$ ($1\leq i\leq 4$) are eigenvalues of $S_1$ and $\tilde{\beta}_i$ is the main angle of $S_1$ defined in Section~2.
See Theorem~\ref{thm:2} and Remark~\ref{rem:1} for the spectra of doubly regular tournaments.

In Section 2, we prepare the fundamental notation of oriented graphs and characterize the tournament with the adjacency matrix having a certain spectrum in terms of
the spectrum of the Seidel matrix.
In Section 3, we prove Theorem~\ref{thm:1}.

\section{Tournaments and their Seidel matrices}
Let $G=(V,E)$ be an oriented graph of size $n$, namely the vertex set $V$ consists of $n$ elements and the edge set $E\subset V\times V$ satisfies $E\cap E^T=\emptyset$, where $E^T:=\{(x,y)\mid (y,x)\in E\}$. 
The {\it adjacency matrix} $A$ of $G$ is indexed by the vertex set $V$, and its entries are defined as follows: 
\[
A_{x y}=
\begin{cases}
1 \text{ if $(x,y) \in E$}, \\
0 \text{ otherwise}.
\end{cases}
\]
Since $E$ satisfies $E\cap E^T=\emptyset$, $A$ satisfies $A\circ A^T=\boldsymbol{0}$, where $\circ$ is the entrywise product of matrices and $\boldsymbol{0}$ denotes the zero matrix. 
The {\it Seidel matrix} $S$ of $G$ is defined by $S=\sqrt{-1}(A-A^T)$.
An oriented graph $G$ is said to be a {\it tournament} if its adjacency matrix satisfies $A+A^T=J-I$.
The vector $A \boldsymbol{1}$ is called the score vector of the tournament,
where $\boldsymbol{1}$ is the all-ones column vector.
A tournament $G$ of size $n$ is {\it regular} if all entires of the score vector are equal to $(n-1)/2$, which implies that $n$ must be odd.
A regular tournament $G$ is {\it doubly regular} if the number of neighbors of a pair of distinct vertices does not depend on the choice of the pair. 
A tournament $G$ of size even $n$ is {\it almost regular} if the entires of the column vector $A \boldsymbol{1}$ consist of $n/2$ and $(n-2)/2$ with the same number of times $n/2$.

A square matrix $M$ is said to be {\it normal} if $MM^T=M^T M$. 
It is known that a normal matrix $M$ can be diagonalized by a unitary matrix, 
namely the eigenspaces corresponding to different eigenvalues are orthogonal.
Let $\tau_i$ ($1\leq i\leq s$) be the distinct eigenvalues of $M$. 
Let $m_i$ be the multiplicity of $\tau_i$, and $\mathcal{E}_i$ the eigenspace of $\tau_i$. 
Let $P_i$ be the orthogonal projection matrix onto $\mathcal{E}_i$. 
Then we can easily find that $P_i^*=P_i$, $\sum_{i=1}^s P_i=I$ and $P_i P_j=\delta_{i j}P_i$, where $P_i^*$ denotes the transpose conjugate of $P_i$  and $\delta_{i j}$ denotes the Kronecker delta.
The notion of main angles for the adjacency matrix of a simple undirected graph was introduced in \cite{C1971}, see \cite{R2007} for the recent progress.
Here we consider the same concept for a normal matrix.
Define $\beta_i$ by 
\[
\beta_i:= \frac{1}{\sqrt{n}} \sqrt{(P_i \cdot \boldsymbol{1})^*(P_i \cdot \boldsymbol{1})},
\] 
where $n$ is the size of the square matrix $M$.
We call $\beta_i$ the {\it main angle} of $\tau_i$.
By the definition of main angles, we have 
\begin{align}\label{eq:01}
\sum_{i=1}^s \beta_i^2=1.
\end{align}

Let $G$ be a tournament of size $n$ with the adjacency matrix $A$ and the Seidel matrix $S$.
Let $\{\theta_i\}_{i=1}^s$ be the distinct eigenvalues of $A$.
Let $m_i$ be the algebraic multiplicity of $\theta_i$.
When $A$ is normal, we denote the main angle of $\theta_i$ by $\beta_i$.

Since the Seidel matrix $S$ is normal, we may define the main angles of $S$.
Moreover $S$ is Hermitian, and all eigenvalues of $S$ are real.
Let $\tilde{\theta}_1>\cdots>\tilde{\theta}_{\tilde{s}}$ be the distinct eigenvalues of $S$ and 
let $\tilde{m}_i$, $\tilde{\beta}_i$ be the multiplicity and the main angle of $\tilde{\theta}_i$ for $1\leq i\leq \tilde{s}$.
The spectral decomposition of the Seidel matrix is $S=\sum_{i=1}^{\tilde{s}}\tilde{\theta}_i P_i$.
The following are fundamental results on $S$.
\begin{lemma}\label{lem:1}
Let $G$ be a tournament of size $n$ with the adjacency matrix $A$ and the Seidel matrix $S$.
\begin{enumerate}
\item $\tilde{\theta}_{\tilde{s}+1-i}=-\tilde{\theta}_{i}$, $\tilde{m}_{\tilde{s}+1-i}=\tilde{m}_{i}$ and $\tilde{\beta}_{\tilde{s}+1-i}=\tilde{\beta_{i}}$ for $1\leq i \leq \tilde{s}$,
\item $\sum_{i=1}^{\tilde{s}}\tilde{m}_i=n$ and $\sum_{i=1}^{\tilde{s}}\tilde{m}_i \tilde{\theta}_i^2=n^2-n$.
\item $G$ is regular if and only if $S\boldsymbol{1}=0$.
\end{enumerate}
\end{lemma}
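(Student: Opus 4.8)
The plan is to dispatch the three items separately; each one reduces to a short manipulation using only the two defining relations $S=\sqrt{-1}(A-A^T)$ and $A+A^T=J-I$, together with the fact (already recorded in the text) that $S$ is Hermitian.

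For item (1), the key observation is that $A$ is a real matrix, so $\overline{S}=\overline{\sqrt{-1}(A-A^T)}=-S$, while $S^*=S$ forces every eigenvalue $\tilde\theta$ to be real. Hence if $Sv=\tilde\theta v$, then $S\overline{v}=-\overline{S}\,\overline{v}\cdot(-1)=\ldots$, more directly $\overline{S}\,\overline{v}=\overline{\tilde\theta v}=\tilde\theta\overline{v}$, so $S\overline{v}=-\tilde\theta\overline{v}$. Thus complex conjugation is a conjugate-linear bijection $\mathcal{E}_{\tilde\theta_i}\to\mathcal{E}_{-\tilde\theta_i}$, which yields $\tilde\theta_{\tilde{s}+1-i}=-\tilde\theta_i$ and $\tilde m_{\tilde{s}+1-i}=\tilde m_i$. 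The same fact shows that the orthogonal projection onto $\mathcal{E}_{-\tilde\theta_i}$ is the entrywise conjugate of $P_i$, i.e. $P_{\tilde{s}+1-i}=\overline{P_i}$. Since $\boldsymbol{1}$ is real, $(P_{\tilde{s}+1-i}\boldsymbol{1})^*(P_{\tilde{s}+1-i}\boldsymbol{1})=(\overline{P_i}\,\boldsymbol{1})^*(\overline{P_i}\,\boldsymbol{1})=\overline{(P_i\boldsymbol{1})^*(P_i\boldsymbol{1})}$, and the right-hand side is a nonnegative real number, hence equals $(P_i\boldsymbol{1})^*(P_i\boldsymbol{1})$; dividing by $n$ and taking square roots gives $\tilde\beta_{\tilde{s}+1-i}=\tilde\beta_i$.

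For item (2), the identity $\sum_{i}\tilde m_i=n$ is immediate since $S$ is Hermitian, hence unitarily diagonalizable, of size $n$. For the weighted sum, $\sum_i \tilde m_i\tilde\theta_i^2=\operatorname{tr}(S^2)=\operatorname{tr}(S^*S)=\sum_{x,y}|S_{xy}|^2$. Because $G$ is a tournament, for $x\neq y$ exactly one of $A_{xy},A_{yx}$ equals $1$, so $|S_{xy}|=1$, while $S_{xx}=0$; therefore the sum equals $n(n-1)=n^2-n$. For item (3), applying $A+A^T=J-I$ to $\boldsymbol{1}$ gives $A^T\boldsymbol{1}=(n-1)\boldsymbol{1}-A\boldsymbol{1}$, hence $S\boldsymbol{1}=\sqrt{-1}(A\boldsymbol{1}-A^T\boldsymbol{1})=\sqrt{-1}\,(2A\boldsymbol{1}-(n-1)\boldsymbol{1})$. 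Thus $S\boldsymbol{1}=0$ if and only if $A\boldsymbol{1}=\tfrac{n-1}{2}\boldsymbol{1}$, i.e. every entry of the score vector equals $(n-1)/2$, which is exactly the definition of $G$ being regular.

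None of the steps is genuinely deep; the only place demanding care is the bookkeeping in item (1), where one must track how entrywise complex conjugation acts on eigenspaces and on the corresponding orthogonal projections, so that multiplicities and main angles are transferred to the paired eigenvalue simultaneously. Everything else is a one-line trace or Frobenius-norm computation, or a direct substitution of the tournament identity.
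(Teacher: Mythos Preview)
Your proof is correct and follows essentially the same route as the paper: item~(1) via the fact that $\sqrt{-1}S$ is real skew-symmetric (equivalently $\overline{S}=-S$), item~(2) via $\operatorname{tr}(S^2)$, and item~(3) via $A+A^T=J-I$. The only cosmetic difference is that for $\operatorname{tr}(S^2)$ you count off-diagonal entries of $S$ directly (Frobenius norm), whereas the paper first expands $S^2=-A^2-(A^T)^2+AA^T+A^TA$ and then takes the trace; and your treatment of item~(1) spells out the main-angle part explicitly, which the paper leaves implicit.
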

\begin{proof}
(1) Follows from that $\sqrt{-1}S$ is skew-symmetric.

(2) Follows from taking the traces of $\sum_{i=1}^{\tilde{s}}P_i=I$ and $\sum_{i=1}^{\tilde{s}}\tilde{\theta_i}^2P_i=S^2=-A^2-(A^T)^2+A A^T+A^T A$.

(3) $G$ is regular if and only if $A\boldsymbol{1}=\frac{n-1}{2}\boldsymbol{1}$.
When this is the case, by $A^T=J-A-I$ we have $A^T\boldsymbol{1}=\frac{n-1}{2}\boldsymbol{1}$.
Thus $A\boldsymbol{1}=\frac{n-1}{2}\boldsymbol{1}$ if and only if $S\boldsymbol{1}=0$.
\end{proof}
 
The following lemma characterizes the almost regularity for a tournament in terms of the data of the spectrum of the Seidel matrix.
\begin{lemma}\label{lem:almost}
Let $n$ be an even integer at least four and $G$ a tournament of size $n$.
The the following are equivalent:
\begin{enumerate} 
\item $G$ is almost regular,
\item $\sum_{i=1}^{\tilde{s}}\tilde{\theta_i}^2 \tilde{\beta_i}^2=1$.
\end{enumerate}
\end{lemma}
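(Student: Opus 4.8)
The plan is to reduce the quantity $\sum_{i=1}^{\tilde{s}}\tilde{\theta}_i^2\tilde{\beta}_i^2$ to a simple expression in the score vector $d=A\boldsymbol{1}$ and then read off the equivalence by a parity argument. Since $P_i^*=P_i$ and $P_iP_j=\delta_{ij}P_i$, we have $(P_i\boldsymbol{1})^*(P_i\boldsymbol{1})=\boldsymbol{1}^*P_i\boldsymbol{1}$, so by the spectral decomposition $S^2=\sum_{i=1}^{\tilde{s}}\tilde{\theta}_i^2P_i$,
\[
\sum_{i=1}^{\tilde{s}}\tilde{\theta}_i^2\tilde{\beta}_i^2=\frac{1}{n}\sum_{i=1}^{\tilde{s}}\tilde{\theta}_i^2\,\boldsymbol{1}^*P_i\boldsymbol{1}=\frac{1}{n}\,\boldsymbol{1}^*S^2\boldsymbol{1}.
\]

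Next I would expand $S^2=-A^2-(A^T)^2+AA^T+A^TA$ exactly as in the proof of Lemma~\ref{lem:1}(2), and use $A^T=J-I-A$, which gives $A^T\boldsymbol{1}=(n-1)\boldsymbol{1}-d$. A short computation (all entries are real, so $\boldsymbol{1}^*=\boldsymbol{1}^T$) yields $\boldsymbol{1}^TA^2\boldsymbol{1}=\boldsymbol{1}^T(A^T)^2\boldsymbol{1}=((n-1)\boldsymbol{1}-d)^Td$, $\boldsymbol{1}^TAA^T\boldsymbol{1}=\|(n-1)\boldsymbol{1}-d\|^2$ and $\boldsymbol{1}^TA^TA\boldsymbol{1}=\|d\|^2$, hence
\[
\boldsymbol{1}^*S^2\boldsymbol{1}=\bigl\|\,2d-(n-1)\boldsymbol{1}\,\bigr\|^2=\sum_{x\in V}\bigl(2d_x-(n-1)\bigr)^2,
\]
where $d_x$ denotes the $x$-th score.

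Now comes the arithmetic part. Since $n$ is even, $n-1$ is odd, so each integer $2d_x-(n-1)$ is odd and $(2d_x-(n-1))^2\ge1$; therefore $\sum_{i=1}^{\tilde{s}}\tilde{\theta}_i^2\tilde{\beta}_i^2\ge\frac{1}{n}\cdot n=1$, with equality if and only if $2d_x-(n-1)=\pm1$ for every vertex $x$, i.e. $d_x\in\{n/2,(n-2)/2\}$ for all $x$. Finally I would observe that the multiplicities are then automatically balanced: if $a$ vertices have score $n/2$ and $n-a$ have score $(n-2)/2$, then $\sum_x d_x=\boldsymbol{1}^TA\boldsymbol{1}=\binom{n}{2}$ forces $a\,n/2+(n-a)(n-2)/2=n(n-1)/2$, i.e. $a=n/2$. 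Thus equality holds precisely when the score vector takes the values $n/2$ and $(n-2)/2$ each $n/2$ times, which is exactly the almost regularity of $G$.

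The only mildly technical step is the bookkeeping that collapses $\boldsymbol{1}^*S^2\boldsymbol{1}$ to $\|2d-(n-1)\boldsymbol{1}\|^2$; once that identity is in hand, both the parity bound and the balancing of the counts are immediate, and no tools beyond Lemma~\ref{lem:1} and elementary linear algebra are required.
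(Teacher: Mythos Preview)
Your proof is correct and follows essentially the same route as the paper: both reduce $\sum_i\tilde{\theta}_i^2\tilde{\beta}_i^2$ to $\frac{1}{n}\boldsymbol{1}^TS^2\boldsymbol{1}$, rewrite this in terms of the score vector, and then use the parity observation that for even $n$ each $2d_x-(n-1)$ is an odd integer (equivalently $(s_i-\tfrac{n-1}{2})^2\ge\tfrac14$), together with $\sum_x d_x=\binom{n}{2}$ to force the balanced counts. The only cosmetic difference is that the paper gets $\boldsymbol{1}^TS^2\boldsymbol{1}$ more directly by writing $\boldsymbol{s}=\tfrac12((n-1)\boldsymbol{1}-\sqrt{-1}\,S\boldsymbol{1})$ and computing $\|S\boldsymbol{1}\|^2$, rather than expanding $S^2$ termwise as you do.
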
 
\begin{proof}
Denote the score vector of $A$ by $\boldsymbol{s}$ and the $i$-th entry of $\boldsymbol{s}$ by $s_i$.
First the following equality holds for any tournament:
\begin{align}\label{eq:2}
\sum_{i=1}^n s_i=\boldsymbol{s}^T \boldsymbol{1}=\boldsymbol{1}^T A \boldsymbol{1}=\tfrac{1}{2}\boldsymbol{1}^T(A+A^T)\boldsymbol{1}=\tfrac{1}{2}\boldsymbol{1}^T(J-I)\boldsymbol{1}=\tfrac{n(n-1)}{2}.
\end{align}
Since the size of the tournament is even, $\frac{n}{4}\leq\sum_{i=1}^n (s_i-\frac{n-1}{2})^2$ holds, namely 
\begin{align} \label{eqn:score}
\boldsymbol{s}^T \boldsymbol{s}\geq \tfrac{n(n^2-2n+2)}{4}.
\end{align}
Equality holds in \eqref{eqn:score} if and only if the score vector consists of $n/2$ and $(n-2)/2$ with the same number of times $n/2$, that is, $G$ is almost regular.

We calculate $\boldsymbol{s}^T \boldsymbol{s}$ in terms of the data of the Seidel matrix.
From $S=\sqrt{-1}(2A-J+I)$ we have $\boldsymbol{s}=\tfrac{1}{2}((n-1)\boldsymbol{1}-\sqrt{-1}S\boldsymbol{1})$.
Since 
\begin{align*}
\boldsymbol{s}^T \boldsymbol{s}&=\frac{1}{4}((n-1)^2 \boldsymbol{1}^T \boldsymbol{1}+\boldsymbol{1}^T S^2 \boldsymbol{1})\\
&=\frac{1}{4}(n(n-1)^2+n\sum_{i=1}^{\tilde{s}}\tilde{\theta_i}^2 \tilde{\beta_i}^2),
\end{align*} 
$G$ is almost regular if and only if $\sum_{i=1}^{\tilde{s}}\tilde{\theta_i}^2 \tilde{\beta_i}^2=1$.
\end{proof}

For a square matrix $A$, we denote the characteristic polynomial of $A$ by $P_A(x)$, that is $P_A(x)=\det(A-x I)$. 
We use the following lemma to prove Theorem \ref{thm:1}.
\begin{lemma}\label{lem:char}
Let $M$ be a normal matrix, $\tau_i$ the distinct 
eigenvalues of $M$, and $\beta_i$ the main angle of $\tau_i$.
Let $c$ be a complex number. 
Then
\[
P_{M+c J}(x)=P_M(x)\big( 1+c \sum_{i=1}^s  \frac{n \beta_i^2}{\tau_i-x} \big). 
\]
\end{lemma}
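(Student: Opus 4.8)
The plan is to exploit the fact that the all-ones matrix is the rank-one matrix $J=\boldsymbol{1}\boldsymbol{1}^T$ and to apply the matrix determinant lemma for rank-one perturbations. First I would fix a complex number $x$ that is not one of the $\tau_i$, so that $M-xI$ is invertible. Since $M+cJ-xI=(M-xI)+c\boldsymbol{1}\boldsymbol{1}^T$, the matrix determinant lemma gives
\[
P_{M+cJ}(x)=\det\bigl((M-xI)+c\boldsymbol{1}\boldsymbol{1}^T\bigr)=\det(M-xI)\bigl(1+c\,\boldsymbol{1}^T(M-xI)^{-1}\boldsymbol{1}\bigr)=P_M(x)\bigl(1+c\,\boldsymbol{1}^T(M-xI)^{-1}\boldsymbol{1}\bigr).
\]

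Next I would compute the scalar $\boldsymbol{1}^T(M-xI)^{-1}\boldsymbol{1}$ from the spectral decomposition $M=\sum_{i=1}^s\tau_iP_i$ of the normal matrix $M$. From $\sum_iP_i=I$ and $P_iP_j=\delta_{ij}P_i$ one gets $(M-xI)^{-1}=\sum_{i=1}^s(\tau_i-x)^{-1}P_i$, whence
\[
\boldsymbol{1}^T(M-xI)^{-1}\boldsymbol{1}=\sum_{i=1}^s\frac{\boldsymbol{1}^TP_i\boldsymbol{1}}{\tau_i-x}.
\]
Because each $P_i$ is an orthogonal projection we have $P_i=P_i^*=P_i^2$, and since $\boldsymbol{1}$ is a real vector, $\boldsymbol{1}^TP_i\boldsymbol{1}=\boldsymbol{1}^*P_i^*P_i\boldsymbol{1}=(P_i\boldsymbol{1})^*(P_i\boldsymbol{1})=n\beta_i^2$, which is precisely the normalization defining the main angle $\beta_i$. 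Substituting back gives
\[
P_{M+cJ}(x)=P_M(x)\Bigl(1+c\sum_{i=1}^s\frac{n\beta_i^2}{\tau_i-x}\Bigr)
\]
for all $x$ outside the finite set $\{\tau_1,\dots,\tau_s\}$.

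Finally I would upgrade this to an identity of polynomials. Writing $P_M(x)=\prod_{i=1}^s(\tau_i-x)^{m_i}$ with each multiplicity $m_i\ge1$, the expression $P_M(x)\cdot n\beta_i^2/(\tau_i-x)$ is itself a polynomial in $x$, so the right-hand side above is a polynomial; it agrees with the polynomial $P_{M+cJ}(x)$ at infinitely many values of $x$, hence the two coincide identically, which is the claimed formula. I do not anticipate a serious obstacle: the only steps needing a little care are the justification of the matrix determinant lemma (a standard Schur-complement computation) and the passage from an identity valid away from $\{\tau_i\}$ to an identity of polynomials.
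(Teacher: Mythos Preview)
Your proof is correct and follows essentially the same route as the paper: apply the matrix determinant lemma to the rank-one perturbation $(M-xI)+c\,\boldsymbol{1}\boldsymbol{1}^T$, then evaluate $\boldsymbol{1}^T(M-xI)^{-1}\boldsymbol{1}$ via the spectral decomposition of $M$ to obtain the main-angle sum. The only cosmetic difference is that the paper states the rank-one determinant identity through the adjugate, $\det(M-xI)+c\,\boldsymbol{1}^T\adj(M-xI)\boldsymbol{1}$, thereby sidestepping the invertibility issue at $x=\tau_i$, whereas you handle those points by the (equally valid) polynomial-identity extension.
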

\begin{proof}
A normal matrix has the spectral decomposition $M=\sum_{i=1}^s \theta_i P_i$, where $P_i$ is the orthogonal projection onto the eigenspace for the eigenvalues $\tau_i$.
By $\sum_{i=1}^s P_i=I$ we have $(M-x I)^{-1}=\sum_{i=1}^s \frac{P_i}{\tau_i-x}$.
Thus 
\begin{align*}
P_{M+c J}(x)&=\det (M+c J-x I)\\
&=\det (M-x I)+c \boldsymbol{1}^T \adj (M-x I) \boldsymbol{1}\\
&=P_M(x)(1+c \boldsymbol{1}^T (M-x I)^{-1} \boldsymbol{1}) \\
&=P_M(x)(1+c \sum_{i=1}^s \frac{\boldsymbol{1}^T P_i \boldsymbol{1}}{\tau_i-x} ) \\
&=P_M(x)(1+c \sum_{i=1}^s \frac{n \beta_i^2}{\tau_i-x}), 
\end{align*}
where $\adj(*)$ is the adjugate matrix.  
\end{proof}
Applying Lemma~\ref{lem:char} to the Seidel matrix of a tournament, we have the following corollary: 
\begin{corollary}\label{cor:1}
Let $G$ be a tournament of size $n$ with the adjacency matrix $A$ and the Seidel matrix $S$.
Then the following holds:
\begin{align}
P_A(x)=(\tfrac{-\sqrt{-1}}{2})^n P_S (\sqrt{-1}(2x+1))(1+\sqrt{-1}\sum_{i=1}^{\tilde{s}} \tfrac{n \tilde{\beta}_i^2}{\tilde{\theta}_i-\sqrt{-1}(2x+1)}).
\end{align} 
\end{corollary}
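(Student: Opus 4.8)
The plan is to rewrite the adjacency matrix $A$ in terms of the Seidel matrix $S$ and the all-ones matrix $J$, and then invoke Lemma~\ref{lem:char} essentially verbatim. Since $A+A^T=J-I$ holds for a tournament, we get $A-A^T=2A-J+I$, hence $S=\sqrt{-1}(A-A^T)=\sqrt{-1}(2A-J+I)$, and therefore
\[
A=\tfrac12\bigl(-\sqrt{-1}\,S+J-I\bigr).
\]

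First I would compute $A-xI=\tfrac12\bigl(-\sqrt{-1}\,S+J-(2x+1)I\bigr)$ and pull the scalar $\tfrac12$ out of the determinant, obtaining $P_A(x)=\det(A-xI)=2^{-n}\det\bigl(-\sqrt{-1}\,S+J-(2x+1)I\bigr)$. Next I would factor the constant $-\sqrt{-1}$ out of the remaining determinant: using $1/\sqrt{-1}=-\sqrt{-1}$ one checks the identity
\[
-\sqrt{-1}\,S+J-(2x+1)I=-\sqrt{-1}\bigl(S+\sqrt{-1}J-\sqrt{-1}(2x+1)I\bigr),
\]
so that $\det\bigl(-\sqrt{-1}\,S+J-(2x+1)I\bigr)=(-\sqrt{-1})^n\,P_{S+\sqrt{-1}J}\bigl(\sqrt{-1}(2x+1)\bigr)$ and hence
\[
P_A(x)=\Bigl(\tfrac{-\sqrt{-1}}{2}\Bigr)^n P_{S+\sqrt{-1}J}\bigl(\sqrt{-1}(2x+1)\bigr).
\]

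Then I would apply Lemma~\ref{lem:char} to the normal matrix $M=S$ with $c=\sqrt{-1}$, evaluated at the point $\sqrt{-1}(2x+1)$; this is legitimate because $S$ is normal (indeed Hermitian), and the eigenvalues and main angles occurring in Lemma~\ref{lem:char} are precisely the $\tilde\theta_i$ and $\tilde\beta_i$ defined for $S$. This gives
\[
P_{S+\sqrt{-1}J}\bigl(\sqrt{-1}(2x+1)\bigr)=P_S\bigl(\sqrt{-1}(2x+1)\bigr)\Bigl(1+\sqrt{-1}\sum_{i=1}^{\tilde s}\frac{n\,\tilde\beta_i^2}{\tilde\theta_i-\sqrt{-1}(2x+1)}\Bigr),
\]
and substituting this into the previous display yields the claimed formula. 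There is no real obstacle in this argument; the only point requiring care is the bookkeeping of the powers of $\sqrt{-1}$ and the repeated use of $1/\sqrt{-1}=-\sqrt{-1}$, which is the place where an arithmetic slip could occur.
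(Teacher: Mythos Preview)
Your proof is correct and follows essentially the same approach as the paper: rewrite $A$ as $\tfrac12(-\sqrt{-1}\,S+J-I)$, pull out the scalar $(-\sqrt{-1}/2)^n$ from the determinant, and then apply Lemma~\ref{lem:char} to $S$ with $c=\sqrt{-1}$ at the point $\sqrt{-1}(2x+1)$. You simply make the intermediate factorization step and the derivation of $A$ in terms of $S$ a bit more explicit than the paper does.
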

\begin{proof}
Since $A=\frac{1}{2}(-\sqrt{-1}S-I+J)$ holds, applying Lemma \ref{lem:char} yields the following equations;
\begin{align*}
P_A(x)&=\det (A-x I)\\
&=(\tfrac{-\sqrt{-1}}{2})^n \det (S+\sqrt{-1}J-\sqrt{-1}(2x+1)I)\\
&=(\tfrac{-\sqrt{-1}}{2})^n P_S (\sqrt{-1}(2x+1))(1+\sqrt{-1}\sum_{i=1}^{\tilde{s}} \tfrac{n \tilde{\beta}_i^2}{\tilde{\theta}_i-\sqrt{-1}(2x+1)}). \qquad \qedhere
\end{align*}
\end{proof}

\begin{theorem}\label{thm:2}
Let $G$ be a tournament of size $n$ with the adjacency matrix $A$ and the Seidel matrix $S$.
Then the following are equivalent:
\begin{enumerate}
\item $G$ is doubly regular, 
\item $A$ satisfies that $s=3$ and 
$
(\theta_i)_{i=1}^3=(\tfrac{n-1}{2},\tfrac{-1+\sqrt{-n}}{2}, \tfrac{-1-\sqrt{-n}}{2}) $, 
\item $S$ satisfies that $\tilde{s}=3$, 
$
(\tilde{\theta_i})_{i=1}^3=(\sqrt{n},0,-\sqrt{n})$, and $(\tilde{\beta_i})_{i=1}^3=(0,1,0)$.
\end{enumerate}
\end{theorem}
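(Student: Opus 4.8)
The plan is to prove the cycle of implications $(1)\Rightarrow(3)\Rightarrow(2)\Rightarrow(1)$, using throughout the relation $A=\tfrac12(J-I-\sqrt{-1}\,S)$, equivalently $S=\sqrt{-1}(2A-J+I)$, together with $A+A^T=J-I$ and the identity $S^2=-A^2-(A^T)^2+AA^T+A^TA$ from the proof of Lemma~\ref{lem:1}. The first two implications are short manipulations; the genuinely delicate one is $(2)\Rightarrow(1)$, because the adjacency matrix of a tournament need not be diagonalizable, so one cannot simply substitute the three prescribed eigenvalues into a polynomial identity in $A$.

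\emph{Proof of $(1)\Rightarrow(3)$.} A standard double count applied to the definition of double regularity shows that any two distinct vertices have exactly $\tfrac{n-3}{4}$ common out-neighbours, whence $n\equiv 3\pmod 4$ and $AA^T=A^TA=\tfrac{n+1}{4}I+\tfrac{n-3}{4}J$. Also $G$ is regular, so $S\boldsymbol 1=0$ by Lemma~\ref{lem:1}(3). Substituting $A^2+(A^T)^2=(A+A^T)^2-AA^T-A^TA=(n-2)J+I-AA^T-A^TA$ into the identity for $S^2$ collapses it to $S^2=2(AA^T+A^TA)-(n-2)J-I=nI-J$. Since $S$ is Hermitian with zero diagonal and $S\boldsymbol 1=0$, its eigenvalues are $\sqrt n,0,-\sqrt n$; the $0$-eigenspace is spanned by $\boldsymbol 1$ and is one-dimensional (because $0$ is a simple eigenvalue of $nI-J$), so $\tilde\beta_2=1$, and hence $\tilde\beta_1=\tilde\beta_3=0$ by~\eqref{eq:01}. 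This is $(3)$.

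\emph{Proof of $(3)\Rightarrow(2)$.} Lemma~\ref{lem:1}(1),(2) force $(\tilde m_1,\tilde m_2,\tilde m_3)=(\tfrac{n-1}{2},1,\tfrac{n-1}{2})$. From $\tilde\beta_2=1$ we get $\|P_2\boldsymbol 1\|=\sqrt n=\|\boldsymbol 1\|$, so $\boldsymbol 1$ lies in the one-dimensional $0$-eigenspace, i.e.\ $S\boldsymbol 1=0$; hence $G$ is regular by Lemma~\ref{lem:1}(3) and $A\boldsymbol 1=\tfrac{n-1}{2}\boldsymbol 1$. On $\boldsymbol 1^{\perp}$ the matrix $J$ acts as $0$, so there $2A=-\sqrt{-1}\,S-I$; as $S$ is diagonalizable, the eigenvalues of $A$ on $\boldsymbol 1^{\perp}$ are $\tfrac12(-1-\sqrt{-1}\,\tilde\theta)$ for $\tilde\theta\in\{\sqrt n,-\sqrt n\}$, that is, $\tfrac{-1-\sqrt{-n}}{2}$ and $\tfrac{-1+\sqrt{-n}}{2}$, each with multiplicity $\tfrac{n-1}{2}$. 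Together with the eigenvalue $\tfrac{n-1}{2}$ on $\boldsymbol 1$ this gives $s=3$ with the spectrum of $(2)$.

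\emph{Proof of $(2)\Rightarrow(1)$ --- the main obstacle.} First, $\operatorname{tr}A=0$ and $\operatorname{tr}(A^2)=\sum_{x,y}A_{xy}A_{yx}=0$ (no loops, no $2$-cycles), and the two non-real eigenvalues of the real matrix $A$ occur with equal multiplicity; these constraints pin the multiplicities to $(1,\tfrac{n-1}{2},\tfrac{n-1}{2})$. The key point is then that $\sum_i m_i|\theta_i|^2=\bigl(\tfrac{n-1}{2}\bigr)^2+2\cdot\tfrac{n-1}{2}\cdot\tfrac{n+1}{4}=\tfrac{n(n-1)}{2}=|E|=\operatorname{tr}(AA^T)$, so equality holds in Schur's inequality $\sum_i m_i|\theta_i|^2\le\operatorname{tr}(AA^T)$ and therefore $A$ is normal. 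For a normal matrix the eigenspaces of $A$ and of $A^T$ at a real eigenvalue coincide, so an eigenvector $v$ of $\tfrac{n-1}{2}$ satisfies $(A+A^T)v=(n-1)v$, i.e.\ $(J-I)v=(n-1)v$, i.e.\ $Jv=nv$, which forces $v\in\mathbb{R}\boldsymbol 1$; hence $A\boldsymbol 1=\tfrac{n-1}{2}\boldsymbol 1$ and $G$ is regular. Finally $A|_{\boldsymbol 1^{\perp}}$ is normal with exactly the two eigenvalues $\tfrac{-1\pm\sqrt{-n}}{2}$, hence annihilated by $x^2+x+\tfrac{n+1}{4}$; matching this with the action on $\boldsymbol 1$ yields the global identity $A^2+A+\tfrac{n+1}{4}I=\tfrac{n+1}{4}J$, so $AA^T=AJ-A-A^2=\tfrac{n+1}{4}I+\tfrac{n-3}{4}J$. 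The off-diagonal entries of $AA^T$ count common out-neighbours of pairs of distinct vertices and are all equal to $\tfrac{n-3}{4}$, which together with regularity says exactly that $G$ is doubly regular, closing the cycle. The device that overcomes the possible non-diagonalizability of $A$ is precisely the normality of $A$ extracted from the equality case of Schur's inequality; I expect that to be the one step requiring care, the rest being bookkeeping with $S^2$, $AA^T$, and the action on $\boldsymbol 1$ versus $\boldsymbol 1^{\perp}$.
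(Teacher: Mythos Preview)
Your proof is correct and complete, but it takes a genuinely different route from the paper's. The paper proves the cycle $(1)\Leftrightarrow(2)$ by citing \cite{Z}, then $(1),(2)\Rightarrow(3)$ by applying Lemma~\ref{lem:char} to the normal matrix $A$ to obtain $P_S(x)=-x(x^2-n)^{(n-1)/2}$, and finally $(3)\Rightarrow(2)$ by reading off the multiplicities from Lemma~\ref{lem:1} and plugging into Corollary~\ref{cor:1}. In other words, the paper treats $(1)\Leftrightarrow(2)$ as a black box and uses its characteristic-polynomial machinery for the link to $(3)$.

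Your argument is self-contained: for $(1)\Rightarrow(3)$ you compute $S^2=nI-J$ directly from $AA^T=\tfrac{n+1}{4}I+\tfrac{n-3}{4}J$; for $(3)\Rightarrow(2)$ you restrict $2A=-\sqrt{-1}\,S-I$ to $\boldsymbol{1}^{\perp}$ instead of invoking Corollary~\ref{cor:1}; and for $(2)\Rightarrow(1)$ you replace the citation to \cite{Z} by the equality case of Schur's inequality $\sum m_i|\theta_i|^2\le\operatorname{tr}(AA^T)$ to force $A$ normal, then extract regularity and the quadratic relation $A^2+A+\tfrac{n+1}{4}I=\tfrac{n+1}{4}J$. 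The Schur-inequality step is the real novelty here and is exactly what is needed to handle the a priori non-diagonalizability worry you flag; it does not appear in the paper. Your approach buys independence from \cite{Z} and from Lemma~\ref{lem:char}/Corollary~\ref{cor:1}; the paper's approach buys brevity once those tools are in place and is consistent with the rest of the paper, where Corollary~\ref{cor:1} is the workhorse.
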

\begin{proof}
$(1)\Leftrightarrow (2)$:
The equivalence is proven in \cite[Theorem~3.2]{Z}.

$(1),(2)\Rightarrow (3)$:
Note that by \cite{Z} $P_A(x)=-(x-\tfrac{n-1}{2})(x^2+x+\tfrac{n+1}{4})^{\tfrac{n-1}{2}}$ and $A$ is a normal matrix. 
Since $G$ is regular, the main angles of $A$ are given by $(\beta_i)_{i=1}^3=(1,0,0)$.
Applying Lemma \ref{lem:char} yields the following equation:
\begin{align*}
P_S(x)=-x(x^2-n)^{\tfrac{n-1}{2}}.
\end{align*}
Since $G$ is regular, $\tilde{\beta_2}$ and $\tilde{\beta_3}$ are zero, and thus $\tilde{\beta_1}$ is one.

$(3)\Rightarrow (2)$:
By Lemma~\ref{lem:1} (1) and (2), $(\tilde{m_i})_{i=1}^3=(\tfrac{n-1}{2},1,\tfrac{n-1}{2})$.
Then it follows from Corollary~\ref{cor:1}. 
\end{proof}
\begin{remark}\label{rem:1}
When $G$ is a doubly regular tournament, $A$ is a normal matrix.
Then the multiplicities of eigenvalues for the adjacency matrix and the Seidel matrix are given by
$(m_i)_{i=1}^3=(1,\tfrac{n-1}{2},\tfrac{n-1}{2})$, $(\beta_i)_{i=1}^3=(1,0,0)$
and $(\tilde{m}_i)_{i=1}^3=(\tfrac{n-1}{2},1,\tfrac{n-1}{2})$.
\end{remark}

\begin{theorem}\label{thm:3}
Let $G$ be a tournament of size $n-1$ with the adjacency matrix $A$ and the Seidel matrix $S$.
Then the following are equivalent:
\begin{enumerate}
\item $s=4$, $(\theta_i)_{i=1}^4=(\tfrac{-1+\sqrt{-n}}{2},\tfrac{-1-\sqrt{-n}}{2},\tfrac{n-3+\sqrt{(n-3)(n+1)}}{4}, \tfrac{n-3-\sqrt{(n-3)(n+1)}}{4}),$
\item $\tilde{s}=4$, $(\tilde{\theta_i})_{i=1}^4=(\sqrt{n},1,-1,-\sqrt{n}),(\tilde{\beta_i})_{i=1}^4=(0,\tfrac{1}{\sqrt{2}},\tfrac{1}{\sqrt{2}},0)$.
\end{enumerate}
\end{theorem}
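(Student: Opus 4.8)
The plan is to prove both directions by relating the characteristic polynomial $P_A$ to the pair (spectrum, main angles) of the Hermitian matrix $S$ via Corollary~\ref{cor:1}, which for a tournament of size $n-1$ reads
\[
P_A(x)=\Big(\tfrac{-\sqrt{-1}}{2}\Big)^{n-1}P_S\big(\sqrt{-1}(2x+1)\big)\Big(1+\sqrt{-1}\sum_{i}\tfrac{(n-1)\tilde\beta_i^2}{\tilde\theta_i-\sqrt{-1}(2x+1)}\Big).
\]
Working through $S$ is forced, since $A$ need not be normal. As a preliminary, each of (1) and (2) will turn out to force the relevant multiplicities to be $(\tfrac{n-3}{2},\tfrac{n-3}{2},1,1)$, so $n$ must be odd with $n\geq 5$; for $n\leq 3$ a tournament of size $n-1$ has at most two distinct eigenvalues in either matrix, so (1) and (2) both fail and the equivalence is vacuous. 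Hence assume $n\geq 5$ is odd (so $n-1$ is even and $\sqrt{(n-3)(n+1)}\neq 0$).

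For $(2)\Rightarrow(1)$ I would first read off the multiplicities of $S$ from Lemma~\ref{lem:1}(1),(2): the relations $\tilde m_1=\tilde m_4$, $\tilde m_2=\tilde m_3$, $2\tilde m_1+2\tilde m_2=n-1$, $2n\tilde m_1+2\tilde m_2=(n-1)(n-2)$ give $(\tilde m_i)_{i=1}^4=(\tfrac{n-3}{2},1,1,\tfrac{n-3}{2})$, hence $P_S(y)=(y^2-n)^{(n-3)/2}(y^2-1)$. Substituting this into the displayed identity and writing $u=2x+1$, a short partial-fraction computation collapses the bracketed factor to $\tfrac{u^2-(n-1)u+1}{u^2+1}$, the factor $u^2+1$ cancels against the corresponding factor of $P_S(\sqrt{-1}u)$, and tracking the powers of $\sqrt{-1}$ (the leftover constant is $1$, as it must be since $P_A$ is monic of degree $n-1$) leaves $P_A(x)=(x^2+x+\tfrac{n+1}{4})^{(n-3)/2}\big(x^2-\tfrac{n-3}{2}x-\tfrac{n-3}{4}\big)$. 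The quadratic factors have roots $\tfrac{-1\pm\sqrt{-n}}{2}$ and $\tfrac{n-3\pm\sqrt{(n-3)(n+1)}}{4}$, which are four distinct numbers for $n\geq 5$; so $s=4$ and (1) holds.

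For $(1)\Rightarrow(2)$ I would first pin down the multiplicities $m_i$ of the eigenvalues of $A$: since $P_A\in\mathbb Z[x]$ we have $m_1=m_2$ (complex conjugacy), and then the three identities $\sum_i m_i=n-1$, $\operatorname{tr}(A)=\sum_i m_i\theta_i=0$ and $\operatorname{tr}(A^2)=\sum_i m_i\theta_i^2=0$ (the last since $A\circ A^T=\boldsymbol 0$), combined with $\theta_{3,4}^2=\tfrac{n-3}{2}\theta_{3,4}+\tfrac{n-3}{4}$, force $(m_i)_{i=1}^4=(\tfrac{n-3}{2},\tfrac{n-3}{2},1,1)$. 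Now $P_A$, and hence $R(y):=(\tfrac{-\sqrt{-1}}{2})^{-(n-1)}P_A\big(\tfrac{-1-\sqrt{-1}y}{2}\big)$, is completely known, and the displayed identity becomes $R(y)=P_S(y)\hat f(y)$ with $\hat f(y)=1+\sqrt{-1}(n-1)\sum_i\tfrac{\tilde\beta_i^2}{\tilde\theta_i-y}$. The key observation is that Lemma~\ref{lem:1}(1) makes the spectrum of $S$ symmetric about $0$, so $P_S$ is even and, re-indexing the sum, $\hat f(-y)=2-\hat f(y)$; therefore $R(y)+R(-y)=2P_S(y)$, which recovers $P_S$ from the now-explicit $R$ with no case analysis. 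Carrying this out (using $\mu_++\mu_-=n-1$ and $\mu_+\mu_-=1$ for $\mu_\pm=\tfrac{n-1\pm\sqrt{(n-3)(n+1)}}{2}$) gives $R(y)=(y^2-n)^{(n-3)/2}\big(y^2-\sqrt{-1}(n-1)y-1\big)$ and hence $P_S(y)=(y^2-n)^{(n-3)/2}(y^2-1)$, so $\tilde s=4$ and $(\tilde\theta_i)_{i=1}^4=(\sqrt n,1,-1,-\sqrt n)$. Finally $\hat f(y)=R(y)/P_S(y)=\tfrac{y^2-\sqrt{-1}(n-1)y-1}{y^2-1}$ has simple poles only at $y=\pm1$, so $\tilde\beta_1=\tilde\beta_4=0$, and comparing the residue $\tfrac{-\sqrt{-1}(n-1)}{2}$ of $\hat f$ at $y=1$ with $-\sqrt{-1}(n-1)\tilde\beta_2^2$ gives $\tilde\beta_2^2=\tfrac12$, with $\tilde\beta_3^2=\tfrac12$ by the same symmetry. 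This is (2).

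The main obstacle is exactly the non-normality of $A$, which forbids speaking of its main angles; the trick that clears it in $(1)\Rightarrow(2)$ is the identity $R(y)+R(-y)=2P_S(y)$, recovering $P_S$ from $P_A$ using only the $\pm$-symmetry of $S$. Apart from that, the work is the determination of the multiplicities of the eigenvalues of $A$ from the trace identities (which is where $\sqrt{(n-3)(n+1)}\neq 0$ and $A\circ A^T=\boldsymbol 0$ enter) and the careful bookkeeping of the powers of $\sqrt{-1}$ and the scalar factors produced by Corollary~\ref{cor:1}.
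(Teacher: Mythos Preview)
Your argument is correct. The direction $(2)\Rightarrow(1)$ matches the paper's: determine $(\tilde m_i)=(\tfrac{n-3}{2},1,1,\tfrac{n-3}{2})$ from Lemma~\ref{lem:1} and then read off $P_A$ from Corollary~\ref{cor:1}.

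For $(1)\Rightarrow(2)$ you take a genuinely different route. The paper appeals to \cite[Lemma~1(i)]{KS1994} to transport the eigenvectors of $A$ for $\theta_1,\theta_2$ over to $S$ (with eigenvalues $\pm\sqrt{n}$ and vanishing main angles), leaving a two-dimensional complement on which Lemma~\ref{lem:1}(2) pins down the remaining eigenvalues as $\pm1$; the main angles then follow from \eqref{eq:01} and Lemma~\ref{lem:1}(1). You instead run Corollary~\ref{cor:1} backwards: writing $R(y)=P_S(y)\hat f(y)$ and using the $\pm$-symmetry of the spectrum of $S$ to get $P_S$ even and $\hat f(-y)=2-\hat f(y)$, you recover $P_S(y)=\tfrac12\bigl(R(y)+R(-y)\bigr)$ directly from the now-explicit $P_A$, and then extract the main angles as residues of $R/P_S$. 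Your determination of $(m_i)$ via $\operatorname{tr}A=\operatorname{tr}A^2=0$ (rather than by asserting $m_3=m_4$ through algebraic conjugacy of $\theta_3,\theta_4$, which would fail when $(n-3)(n+1)$ is a perfect square) is also a small but real improvement. The payoff of your approach is self-containment---no external lemma is needed, and both implications go through the same formula---at the cost of a slightly longer computation; the paper's approach is shorter once the Kirkland--Shader lemma is granted and is more structural, since it explains \emph{why} the $\pm\sqrt{n}$ eigenspaces of $S$ carry no component of $\boldsymbol{1}$.
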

\begin{proof}
$(1)\Rightarrow (2)$:
The algebraic multiplicities of $\theta_1$ and $\theta_2$ ($\theta_3$ and $\theta_4$) are equal since they are algebraically conjugate.
Define $m_1$ (resp.\ $m_3$) by the algebraic multiplicity of $\theta_1$ (resp.\ $\theta_3$).
Since the size of the matrix $A$ is $n-1$ and the trace of $A$ is $0$, we have 
\begin{align*}
2m_1+2m_3=n-1,\\ -m_1+\tfrac{n-3}{2}m_3=0.
\end{align*}  
These equations yield $m_1=\tfrac{n-3}{2}$, $m_3=1$.
By \cite[Lemma 1(i)]{KS1994}, all eigenvectors of $A$ for eigenvalue $\theta_i$ for $i=1,2$ are also eigenvectors of $S$ with eigenvalue $-2\text{Im}\theta_i$.
Moreover by \cite[Lemma 1(i)]{KS1994} the corresponding main angles are zero. 
Since the dimension of the subspace of $\mathbb{C}^{n-1}$ spanned by those eigenvectors is $2m_1=n-3$ and $S$ is skew-symmetric, 
we set the remaining eigenvalues of $S$ as $\tau,-\tau$, where $\tau$ is a non-negative real number.
By Lemma~\ref{lem:1} (2), we obtain $n(n-3)+2\tau^2=(n-1)(n-2)$.
Thus $\tau=1$.
We renumber $(\tilde{\theta_i})_{i=1}^4=(\sqrt{n},1,-1,-\sqrt{n})$.
By \eqref{eq:01} and Lemma~\ref{lem:1} (1),
$(\tilde{\beta}_i)_{i=1}^4=(0,\tfrac{1}{\sqrt{2}},\tfrac{1}{\sqrt{2}},0)$.

$(2)\Rightarrow (1)$:
By Lemma~\ref{lem:1} (1) and (2), $(\tilde{m_i})_{i=1}^4=(\tfrac{n-3}{2},1,1,\tfrac{n-3}{2})$.
Then it follows from Corollary~\ref{cor:1}.
\end{proof}
\begin{remark}\label{rem:2}
\begin{enumerate}
\item When $G$ is a tournament satisfying the conditions in Theorem~\ref{thm:3}, then the algebraic multiplicities of the eigenvalues for the adjacency matrix and the Seidel matrix are given by
\begin{align}
(m_i)_{i=1}^4=(\tfrac{n-3}{2},\tfrac{n-3}{2},1,1), (\tilde{m_i})_{i=1}^4=(\tfrac{n-3}{2},1,1,\tfrac{n-3}{2})
\end{align}
\item As will be shown in the next section,
the tournament of size $n-1$ considered in Theorem~\ref{thm:3} is obtained from a doubly regular tournament of size $n$.
\end{enumerate}
\end{remark}

\section{Proof of Theorem~\ref{thm:1}}
\begin{proof}[Proof of Theorem~\ref{thm:1}]
Let $G$ be a doubly regular tournament of size $n$ with the adjacency matrix $A$.
Take a vertex $x$ in $G$.
Let $A_1$ be the adjacency matrix of the graph obtained by deleting the vertex $x$ from $G$,
namely after reordering of the vertices of the tournament $G$ we have 
\begin{align*}
A=\begin{pmatrix}
0  & \boldsymbol{v}^T \\
\boldsymbol{1}-\boldsymbol{v} & A_1   
\end{pmatrix},
\end{align*}
where $\boldsymbol{v}$ is a $(0,1)$-column vector.
We calculate the spectrum of $A_1$.
Let $\tau_1\geq\cdots\geq \tau_{n-1}$ be all the eigenvalues of the Seidel matrix $S_1=\sqrt{-1}(A_1-A_1^T)$.
The interlacing eigenvalues theorem for bordered matrices \cite[Theorem 4.3.8]{HJ} shows that 
\begin{align*}
\tau_1&=\cdots=\tau_{2k}=\sqrt{n},\\
\tau_{2k+3}&=\cdots=\tau_{n-1}=-\sqrt{n}.
\end{align*} 
Next we determine $\tau_{2k+1}$ and $\tau_{2k+2}$.
By Lemma~\ref{lem:1} (1) $\tau_{2k+1}=-\tau_{2k+2}$.
And by Lemma~\ref{lem:1} (2) $\tau_{2k+1}^2+\tau_{2k+2}^2+2\dot (\tfrac{n-1}{2}-1)n=n^2-3n+2$. 
Thus $\tau_{2k+1}=-\tau_{2k+2}=1$ as desired.
Then we can easily find that $\boldsymbol{y}=\boldsymbol{1}+(\sqrt{-1}-1)\boldsymbol{v}$ is the eigenvector of $S_1$ with eigenvalue $1$, and its conjugate vector is that of $S_1$ with the eigenvalue $-1$.
Now we denote by $\tilde{\theta}_1>\cdots>\tilde{\theta}_4$ the distinct eigenvalues of $S_1$, $\beta_i$ ($i=1,\ldots,4$) the corresponding main angles.
Then direct calculation of the norm of $\boldsymbol{y}^T\boldsymbol{1}$ and $\bar{\boldsymbol{y}}^T\boldsymbol{1}$, where $\bar{\boldsymbol{y}}$ denotes the complex conjugate vector of $\boldsymbol{y}$, shows that $\beta_2=\beta_3=\tfrac{1}{\sqrt{2}}$.
By \eqref{eq:01} $\beta_1=\beta_4=0$ hold.

Conversely let $G_1$ be a tournament of size $n-1$ with the adjacency matrix $A_1$ and the Seidel matrix $S_1$ satisfying the property \eqref{eq:1}. 
By Remark~\ref{rem:2} (1) the multiplicities of $S_1$ are $(\tilde{m_i})_{i=1}^4=(\tfrac{n-3}{2},1,1,\tfrac{n-3}{2})$.
It follows from Lemma~\ref{lem:almost} that $G_1$ is almost regular.

Hence we can add one more vertex to $G_1$ so that it becomes a regular tournament $G$ of size $n$.
Let $S$ be the Seidel matrix of $G$.
We may express 
\begin{align*}
S=\begin{pmatrix}
0  & \boldsymbol{w}^T \\
-\boldsymbol{w} & S_1   
\end{pmatrix}
\end{align*}
for some $(\sqrt{-1},-\sqrt{-1})$-column vector $\boldsymbol{w}$ such that $\boldsymbol{w}^T\boldsymbol{1}=0$ and $S_1 \boldsymbol{1}=\boldsymbol{w}$.
Then by Lemma~\ref{lem:char} we have
\begin{align*}
P_{S}(t)&=\det \begin{pmatrix}
-t  & \boldsymbol{w}^T \\
-\boldsymbol{w} & -t I+S_1   
\end{pmatrix}\displaybreak[0] \\
&=\det \begin{pmatrix}
-t  & \boldsymbol{w}^T \\
-t \boldsymbol{1} & -t I+S_1   
\end{pmatrix}\displaybreak[0] \\
&=\det \begin{pmatrix}
-n t  & -t \boldsymbol{1}^T \\
-t \boldsymbol{1} & -t I+S_1   
\end{pmatrix}\displaybreak[0] \\
&=t\det \begin{pmatrix}
-n  & -\boldsymbol{1}^T \\
0 & -t I+S_1+\tfrac{t}{n}J   
\end{pmatrix}\displaybreak[0] \\
&=(-n)t P_{S_1+\tfrac{t}{n}J}(t) \displaybreak[0] \\
&=(-n)t P_{S_1}(t)(1+\tfrac{(n-1) t}{n}\sum_{i=1}^4\tfrac{\tilde{\beta}_i^2}{\tilde{\theta}_i-t}) \displaybreak[0] \\
&=(-n)t(t^2-n)^{\tfrac{n-1}{2}}(t^2-1)(1+\tfrac{(n-1) t}{n}(\tfrac{1/2}{-1-t}+\tfrac{1/2}{1-t})) \displaybreak[0] \\
&=-t(t^2-n)^{\tfrac{n-3}{2}}.
\end{align*}
Since $G$ is regular, the main angle corresponding to the eigenvalue $0$ is one and the others are zero. 
Therefore $G$ is a doubly regular tournament by Theorem~\ref{thm:2}.
\end{proof}

\noindent
{\it Hiroshi Nozaki}\\
	Graduate School of Information Sciences, \\
	Tohoku University \\
	Aramaki-Aza-Aoba 6-3-09, \\
	Aoba-ku, Sendai 980-8579, \\
	Japan.\\ 
	nozaki@ims.is.tohoku.ac.jp\\
	
	\qquad \\
{\it Sho Suda}\\
	Graduate School of Information Sciences, \\
	Tohoku University \\
	Aramaki-Aza-Aoba 6-3-09, \\
	Aoba-ku, Sendai 980-8579, \\
	Japan.\\ 
	suda@ims.is.tohoku.ac.jp\\


\begin{thebibliography}{99}
\bibitem{C1971}
D. Cvetkovi\'{c}, Graphs and their spectra. Univ.\ Beograd.\ Publ.\ Elektrotehn.\ Fak.\ Ser.\ Mat.\ Fiz.\ No.\ 354--356 (1971), 1--50.

\bibitem{CRS1997}
D. Cvetkovi\'{c}, P. Rowlinson and S. Simi\'{c}, 
Eigenspaces of graphs. Encyclopedia of Mathematics and its Applications, 66. Cambridge University Press, Cambridge, 1997. xiv+258 pp.

\bibitem{GR2001}
C. Godsil and G. Royle, Algebraic graph theory. Graduate Texts in Mathematics, 207. Springer-Verlag, New York, 2001. xx+439 pp.

\bibitem{KS1994}
Kirkland, Stephen J.and Shader, Bryan L.
Tournament matrices with extremal spectral properties. (English summary) 
Linear Algebra Appl.\ 196 (1994), 1--17. 

\bibitem{HJ}
R. A. Horn, C. R. Johnson, Matrix Analysis,
{Cambridge University Press,\ } Cambridge, 1990.

\bibitem{KB1994}
S. J. Kirkland and B. L. Shader,  Tournament matrices with extremal spectral properties. Linear Algebra Appl. 196 (1994), 1-17.

\bibitem{KS2008}
C. Koukouvinos and S. Stylianou,  On skew-Hadamard matrices. Discrete Math. 308 (2008), no. 13, 2723--2731.

\bibitem{RB1972}
K. B. Reid and E. Brown, Doubly regular tournaments are equivalent to skew Hadamard matrices. J. Combinatorial Theory Ser. A 12 (1972), 332--338.

\bibitem{R2007}
P. Rowlinson, 
The main eigenvalues of a graph: a survey,
Appl. Anal. Discrete Math. 1 (2007), no. 2, 445-471.

\bibitem{Z}
N. Zagaglia Salvi,
Some properties of regular tournament matrices. (Italian. English summary) Proceedings of the conference on combinatorial and incidence geometry: principles and applications (La Mendola, 1982), 635--643, 
Rend. Sem. Mat. Brescia, 7, Vita e Pensiero, Milan, 1984.

\end{thebibliography}
\end{document}